\documentclass[reqno]{amsart}

\usepackage{amssymb}                     
\usepackage{amsmath}                     
\usepackage{amsfonts}
\usepackage{amsthm}                      
\usepackage{tabularx}                    

\newcommand{\tor}{\operatorname{Tor}}

\newcommand{\pd}{\operatorname{pd}}

\newcommand{\length}{\operatorname{\ell}}
\newcommand{\spec}{\operatorname{Spec}}
\newcommand{\supp}{\operatorname{Supp}}
\newcommand{\var}{V}
\renewcommand{\H}{\operatorname{H}}

\newcommand{\codim}{\operatorname{codim}}

\newcommand{\vandim}{\operatorname{vdim}}
\newcommand{\f}[1]{{^{\mathit{f}^{#1}}}\!}
\newcommand{\frob}{F}
\newcommand{\grot}{\operatorname{\mathbb{G}\sf{C}}}
\newcommand{\Grot}{\operatorname{\mathbb{G}\sf{P}}}
\newcommand{\cat}{\operatorname{\sf{C}}}
\newcommand{\Cat}{\operatorname{\sf{P}}}

\newcommand{\id}{\operatorname{id}}

\newcommand{\XXX}{\mathfrak{X}}
\newcommand{\YYY}{\mathfrak{Y}}

\newcommand{\ppp}{\mathfrak{p}}
\newcommand{\qqq}{\mathfrak{q}}
\newcommand{\mmm}{\mathfrak{m}}
\newcommand{\NN}{\mathbb{N}}
\newcommand{\ZZ}{\mathbb{Z}}
\newcommand{\QQ}{\mathbb{Q}}

\newcommand{\comp}[1]{{#1}^c}
\renewcommand{\epsilon}{\varepsilon}
\renewcommand{\phi}{\varphi}
\renewcommand{\tilde}{\widetilde}
\renewcommand{\langle}{[}
\renewcommand{\rangle}{]}

\theoremstyle{plain}
\newtheorem{theo}{Theorem}

\newtheorem{prop}[theo]{Proposition}

\theoremstyle{definition}
\newtheorem{defi}[theo]{Definition}

\newtheorem{rema}[theo]{Remark}

\newtheorem*{assu}{Assumption}
\theoremstyle{remark}

\hyphenation{Grothen-dieck}
\begin{document}
\title{Diagonalizing the Frobenius}
\author{Esben Bistrup Halvorsen}
\address{Esben Bistrup Halvorsen\\ Department of Mathematical
  Sciences\\ University of Copenhagen\\ Universitetsparken 5\\
  2100 K{\o}benhavn \O\\ Denmark.
}
\email{esben@math.ku.dk}
\thanks{The author is partially supported by FNU, the Danish
  Research Council}
\subjclass[2000]{Primary 13A35, 13D22, 13H15, 14F17}
\keywords{Grothendieck space, Frobenius, vanishing, vanishing dimension, intersection multiplicity, Dutta multiplicity}
\begin{abstract}
Over a Noetherian, local ring $R$ of prime characteristic $p$, the
Frobenius functor $\frob_R$ induces a  diagonalizable map on certain quotients
of rational Grothendieck  groups. This leads to an explicit formula
for the Dutta  multiplicity, and it is shown that a weaker version of
 Serre's vanishing conjecture holds if only
 $\chi(\frob_R(X))=p^{\dim R}\chi(X)$ for all bounded complexes $X$ of
 finitely generated, projective modules with finite length homology.
\end{abstract}

\maketitle

\section{Introduction}
For finitely generated modules $M$ and $N$ over a commutative,
Noetherian, local ring $R$ with  $\pd M<\infty$ and
$\length(M\otimes_R N)<\infty$, the \emph{intersection multiplicity} defined by Serre~\cite{serre} is given by
\[
\chi(M,N) = \sum_i (-1)^i\length(\tor^R_i(M,N)).
\]
The \emph{vanishing conjecture}, also formulated by Serre, states that
\[
\chi(M,N)=0\quad\text{whenever}\quad\dim M +\dim N <\dim R.
\]
Serre's original conjecture requires $R$ to be regular, but the
conjecture makes sense in the more general setting presented above. 
Serre proved that the vanishing conjecture holds when $R$ is regular and of equal
characteristic or unramified of mixed
characteristic. Roberts~\cite{robertsC} and, independently, Gillet
and Soul\'e~\cite{gilletsoule} later proved the conjecture in the more
general setting where the requirement that $R$ be regular is weakened
to the requirement that 
$R$ be a complete intersection and both modules have finite projective
dimension. Foxby~\cite{foxbyM} proved that the conjecture generally holds 
when $\dim N\leq 1$. 

However, the vanishing conjecture does not hold in the full
generality presented above. This was shown in the famous
counterexample by Dutta, Hochster and
McLaughlin~\cite{dhm}. Subsequently, other counterexamples have
emerged, such as the one by Miller and Singh~\cite{millersingh}. 

For rings with prime characteristic $p$, a different intersection multiplicity was introduced by
Dutta~\cite{duttamultiplicity}. The \emph{Dutta multiplicity} is given
when $\dim M+\dim N\leq \dim R$ by 
\[
\chi_\infty(M,N) = \lim_{e\to\infty} \frac{1}{p^{e\codim M}} \chi(\frob_R^e(M),N),
\]
where $\frob_R$ denotes the Frobenius functor. The Dutta multiplicity
satisfies the vanishing conjecture and is equal to the usual
intersection multiplicity whenever this satisfies the vanishing conjecture.

This paper studies the interplay between the vanishing
conjecture and the Frobenius functor. The investigations are performed
by studying \emph{Grothendieck spaces} which are tensor products of
$\QQ$ with homomorphic images of Grothendieck groups of
complexes. Proposition~\ref{vanishing} shows that the class of a bounded
complex of finitely generated, projective modules in a Grothendieck
space satisfies the vanishing conjecture if and only if the 
Frobenius functor acts on it by multiplication by a constant.
Following this is Theorem~\ref{maintheo}, which describes
how to decompose such a class of a complex into eigenvectors for the 
Frobenius. This leads in Remark~\ref{duttacalculations} to the
following formula for the Dutta multiplicity:
\[
\chi_\infty(M,N)= 
\begin{pmatrix} 1 & 0 & \cdots &  0 \end{pmatrix}
 \begin{pmatrix} 1 & 1  & \cdots & 1 \\ 
 p^t & p^{t-1} & \cdots & p^{t-u} \\
 \vdots & \vdots & \ddots & \vdots \\
 p^{ut} & p^{u(t-1)} & \cdots &
 p^{u(t-u)}\end{pmatrix}^{\!\!\!-1} \!\!\!
 \begin{pmatrix} \chi(M,N) \\ \chi(\frob_R(M),N) \\ \vdots \\
   \chi(\frob_R^u(M),N) \end{pmatrix} .
\]
Here, $t$ is the co-dimension of $M$ and $u$ is a number that, in a sense, measures how
far $M$ is from satisfying the vanishing conjecture.
The formula can be useful, for example when using a computer to calculate Dutta multiplicity. 
It should be noted that the diagonalizability of
the Frobenius functor has been discussed by Kurano~\cite{KuranoRR}, but that
the approach taken and the results obtained in this paper are new, at least to the knowledge of this
author.

The last section of this paper introduces the concept of \emph{numerical vanishing},
a condition which holds if the  vanishing conjecture holds, and
which implies a weaker version of the vanishing
conjecture, namely the one in which both modules are required to have
finite projective dimension. A consequence of the investigations
performed is the result from Remark~\ref{lengthrema2} that the 
weak vanishing conjecture holds
if only $\chi(\frob_R(X))=p^{\dim R}\chi(X)$ for all bounded
complexes $X$ of finitely generated, projective modules with finite length homology.

\section{Notation}
Throughout this paper, $R$ denotes a commutative, Noetherian,
local ring with maximal ideal $\mmm$ and residue field
$k=R/\mmm$. Modules and complexes are, unless otherwise stated,
assumed to be $R$-modules and $R$-complexes, respectively. Modules are
considered to be complexes concentrated in degree zero.

The \emph{spectrum} of $R$, denoted $\spec R$, is the set of prime ideals of $R$. A subset
$\XXX\subseteq\spec R$ is
\emph{specialization-closed} if, for any inclusion $\ppp\subseteq\qqq$ of
prime ideals, $\ppp\in\XXX$ implies $\qqq\in\XXX$. A closed subset of
$\spec R$ is, in particular, specialization-closed.
 Throughout, whenever
we deal with subsets of the spectrum of a ring, it is implicitly assumed that
they are non-empty and specialization-closed.

For every $\XXX\subseteq\spec R$, the \emph{dimension} of
$\XXX$, denoted $\dim\XXX$, is the usual Krull dimension of
$\XXX$, and the \emph{co-dimension} of $\XXX$, denoted $\codim\XXX$,
is the number $\dim R -\dim\XXX$. The dimension and
co-dimension of a complex $X$ (and hence also of a module) is the
dimension and co-dimension of its support: that is, of the set
$\supp_R X=\{\ppp\in\spec R\mid \H(X_\ppp)\neq 0\}$.

\section{Grothendieck spaces and vanishing}
For every (non-empty, specialization-closed) $\XXX\subseteq\spec R$, consider the following categories:
\begin{center}
\begin{tabularx}{\textwidth}{rcX}
$\Cat(\XXX)$ &=& the category of bounded complexes with support
contained in $\XXX$ and consisting of finitely generated, projective modules.\\
$\cat(\XXX)$ &=& the category of homologically bounded complexes with
support contained in $\XXX$ and with finitely generated homology modules.
\end{tabularx}
\end{center}
If $\XXX=\{\mmm\}$, we simply write $\Cat(\mmm)$ and $\cat(\mmm)$.

The \emph{Euler characteristic} of a complex $X$ in $\cat(\mmm)$ is
the integer
\[
\chi(X)= \sum_{i}(-1)^i \length(\H_i(X)) .
\]
If $M$
and $N$ are finitely generated modules with $\pd M<\infty$ and
$\length(M\otimes_R N)<\infty$, and $X$ is a projective resolution of
$M$, $X\otimes_R N$ is a complex in $\cat(\mmm)$, and the intersection
multiplicity $\chi(M,N)$ of $M$ and $N$ is the number
$\chi(X\otimes_R N)$. There is no problem in letting $N$ be a complex rather than just
a module, so the definition of intersection multiplicity can be extended
to an even more general setting: for subsets $\XXX,\YYY\subseteq\spec R$ with
$\XXX\cap\YYY=\{\mmm\}$ and complexes $X\in\Cat(\XXX)$ and
$Y\in\cat(\YYY)$, the intersection multiplicity of $X$ and
$Y$ is defined as
\[
\chi(X,Y)=\chi(X\otimes_R Y)= \sum_{i}(-1)^i \length(\H_i(X\otimes_R
Y)) .
\]
In the construction of Grothendieck spaces below, the extra requirement that $\dim\XXX+\dim\YYY\leq
\dim R$ is needed;  this corresponds to the assumption that $\dim M +\dim N \leq \dim R$, which is necessary in order to define the Dutta
multiplicity. To formalize this, define, for each $\XXX\subseteq\spec R$, the subset
\[
\comp{\XXX}=\big\{\qqq\in\spec R\mid \XXX\cap\var(\qqq)=\{\mmm\}\text{ and } \dim
\var(\qqq)\leq\codim\XXX\big\}.
\]
The set $\comp\XXX$ is the largest  specialization-closed subset of $\spec R$ such that
\[
\XXX\cap\comp{\XXX}=\{\mmm\}\quad \text{and}\quad
\dim\XXX+\dim\comp{\XXX}\leq\dim R.
\]
(It is not hard to see that, when $\XXX$ is closed, $\dim \XXX+\dim\comp\XXX=\dim
R$.) Thus, for $\XXX,\YYY\subseteq\spec R$, the property that
$\XXX\cap\YYY=\{\mmm\}$ and $\dim\XXX+\dim\YYY\leq\dim R$ is 
equivalent to $\YYY\subseteq\comp\XXX$ which again is equivalent to $\XXX\subseteq\comp\YYY$.

\begin{defi}
Let $\XXX\subseteq\spec R$.
The \emph{Grothendieck space} of the category $\Cat(\XXX)$ is the
$\QQ$-vector space  $\Grot(\XXX)$ presented by elements $[X]$,
one for each isomorphism class of a complex $X$ in $\Cat(\XXX)$, and relations
\[
[X]=[\tilde X] \quad\text{whenever}\quad
\chi(X,-)=\chi(\tilde X, -)\colon \cat(\comp{\XXX})\to\QQ.
\]
Similarly, the \emph{Grothendieck space} of the category $\cat(\XXX)$
is the $\QQ$-vector space $\grot(\XXX)$ presented by elements $\langle
Y\rangle$, one for each isomorphism class of a complex $Y$ in $\cat(\XXX)$, and
relations 
\[
\langle Y\rangle = \langle \tilde Y\rangle\quad\text{whenever}\quad
\chi(-,Y)=\chi(-,\tilde Y)\colon \Cat(\comp{\XXX})\to\QQ.
\]
If $\XXX=\{\mmm\}$,
we simply write $\Grot(\mmm)$ and $\grot(\mmm)$.
\end{defi}

Since intersection multiplicity is additive on short exact sequences and
trivial on exact complexes, the Grothendieck spaces $\Grot(\XXX)$ and
$\grot(\XXX)$ can also be regarded as the tensor product of $\QQ$ with
quotients of the Grothendieck groups $K_0(\Cat(\XXX))$ and
$K_0(\cat(\XXX))$ of the categories $\Cat(\XXX)$ and
$\cat(\XXX)$. (For further details on Grothendieck groups of
categories of complexes, see~\cite{foxbyhalvorsen}.) In
particular, any relation in one of these Grothendieck
groups is also a relation in the corresponding Grothendieck space.

Intersection multiplicity in one variable naturally induces $\QQ$-linear maps
\[
\chi(-,Y) \colon \Grot(\XXX)\to\QQ\quad\text{given by}\quad \chi([X],Y)=\chi(X,Y)
\]
for each $Y\in\cat(\comp\XXX)$. We equip $\Grot(\XXX)$ with the
initial topology of these maps: this is the coarsest topology such
that all the maps are continuous. Likewise, there are naturally
induced $\QQ$-linear maps 
\[
\chi(X,-) \colon \grot(\XXX)\to\QQ\quad\text{given by}\quad \chi(X,[Y])=\chi(X,Y)
\]
for each $X\in\Cat(\comp\XXX)$, and we equip $\grot(\XXX)$ with the
initial topology of these maps. It is straigthforward to see that
addition and scalar multiplication are continuous operations, making
$\Grot(\XXX)$ and $\grot(\XXX)$ topological $\QQ$-vector
spaces. Henceforth, Grothendieck spaces are always considered to be
topological $\QQ$-vector spaces, so that, for example,  a ``homomorphism'' between
Grothendieck spaces is a continuous and $\QQ$-linear map. 

\begin{prop} \label{grothendieckobs}
Suppose that $\XXX,\YYY\subseteq\spec R$.
\begin{enumerate}
\item \label{ses} If $0\to X\to Y\to Z\to 0$ is a short exact sequence of
  complexes in $\Cat(\XXX)$ (or in $\cat(\XXX)$, respectively), then
  $[Y]=[X]+[Z]$ in $\Grot(\XXX)$ (or in $\grot(\XXX)$, respectively).
\item \label{quasi} If $\phi\colon X\to Y$ is a quasi-isomorphism of complexes in
  $\Cat(\XXX)$ (or in $\cat(\XXX)$, respectively), then $[X] =
  [Y]$ in $\Grot(\XXX)$ (or in $\grot(\XXX)$, respectively). In particular, if $X$ is exact, then $[X]=0$.
\item \label{shift} If $X$ is a complex in $\Cat(\XXX)$ (or in $\cat(\XXX)$,
  respectively), then $[\Sigma^n X] = (-1)^n[X]$ in $\Grot(\XXX)$ (or
  in $\grot(\XXX)$, respectively). (Here, $\Sigma^n(-)$ denotes the
  shift functor, taking 
  a complex $X$ to the complex $\Sigma^n X$ defined by $(\Sigma^n X)_i
  = X_{i-n}$ and $\partial^{\Sigma^n X}_i= (-1)^n\partial^X_{i-n}$.)
\item \label{intheform} Any element in $\Grot(\XXX)$ (or in $\grot(\XXX)$, respectively)
  can be written in the form $r[X]$ for a rational number $r\in\QQ$ and a complex $X$ in
  $\Cat(\XXX)$ (or in $\cat(\XXX)$, respectively). 
\item \label{generatedg} $\grot(\XXX)$ is generated by the elements
  $\langle R/\qqq\rangle$ for prime ideals $\qqq\in\XXX$. 
\item \label{iso} The Euler characteristic $\chi\colon\cat(\mmm)\to\QQ$
  induces an isomorphism (that is, a $\QQ$-linear homeomorphism) 
\[
\chi\colon\grot(\mmm)\overset{\cong}{\to}\QQ\quad\text{given by}\quad
\chi([X])=\chi(X).
\]
\item \label{inclusion} The inclusion $\Cat(\XXX)\to\cat(\XXX)$ and,
  when $\XXX\subseteq\YYY$, the inclusions $\Cat(\XXX)\to\Cat(\YYY)$ and $\cat(\XXX)\to\cat(\YYY)$ of
  categories induce homomorphisms $\Grot(\XXX)\to\grot(\XXX)$,
  $\Grot(\XXX)\to\Grot(\YYY)$ and $\grot(\XXX)\to\grot(\YYY)$ given in
  all cases by   $[X]\mapsto [X]$.
\item \label{tensor} If $\YYY\subseteq\comp\XXX$, the tensor product
  of complexes induces bi-homomorphisms (homomorphisms in each variable)
\begin{gather*}
  -\otimes -\colon\Grot(\XXX)\times \grot(\YYY)\to \grot(\mmm)\rlap{\quad\text{and}} \\
  -\otimes -\colon \Grot(\XXX)\times \Grot(\YYY)\to \Grot(\mmm)
\end{gather*}
given in both cases by $[X]\otimes\langle
Y\rangle=\langle X\otimes_R  Y\rangle$.
\end{enumerate}
\end{prop}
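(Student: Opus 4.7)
The plan is to handle the eight items in blocks that share techniques, working in each case at the level of the presentation (classes and relations) and then checking continuity at the end. Items (i), (ii), (iii) are established first, because they are the standard additivity tools needed for the remaining parts. For (i) I would fix an arbitrary test complex $W$ in $\cat(\comp{\XXX})$ and tensor the short exact sequence with $W$; for $\Cat(\XXX)$ the terms are projective and the sequence stays exact in each degree, while for $\cat(\XXX)$ the test complex lies in $\Cat(\comp{\XXX})$ and again tensoring preserves exactness. The resulting long exact homology sequence then gives $\chi(Y,W)=\chi(X,W)+\chi(Z,W)$, which is exactly the relation defining equality in the Grothendieck space. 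Part (ii) is the mapping cone trick: the cone of a quasi-isomorphism is exact, so combining the short exact sequence $0\to Y\to\operatorname{Cone}(\phi)\to \Sigma X\to 0$ with (i) and (iii) yields $[X]=[Y]$. Part (iii) is a direct unwinding of the definition of $\chi$ after the shift. Part (iv) is the standard consolidation: $[X]+[Y]=[X\oplus Y]$ by (i), $-[X]=[\Sigma X]$ by (iii), so any $\ZZ$-linear combination is a single class, and clearing denominators in $\QQ$ yields the $r[X]$ form.

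For (v) I would reduce a complex $Y\in\cat(\XXX)$ to its homology modules by the soft-truncation short exact sequences $0\to\tau_{\geq n+1}Y\to\tau_{\geq n}Y\to\Sigma^n \H_n(Y)\to 0$ and iterated application of (i) and (iii), obtaining $\langle Y\rangle=\sum_i(-1)^i\langle \H_i(Y)\rangle$. Each $\H_i(Y)$ is finitely generated with support in $\XXX$, and the standard prime filtration of a finitely generated module has subquotients $R/\qqq$ with $\qqq\in\supp \H_i(Y)\subseteq\XXX$; repeated application of (i) reduces $\langle \H_i(Y)\rangle$ to a combination of the desired generators. For (vi), (v) shows $\grot(\mmm)$ is generated by $\langle k\rangle$, so $\grot(\mmm)=\QQ\cdot\langle k\rangle$; the map $\chi$ sends $\langle k\rangle$ to $\length(k)=1$, and taking $X=R$ shows that $\chi$ is one of the structural maps defining the initial topology on $\grot(\mmm)$, hence is continuous. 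Its inverse $r\mapsto r\langle k\rangle$ is also continuous because, post-composed with any structural map $\chi(X,-)$, it becomes multiplication by $\chi(X,k)\in\QQ$.

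The compatibility items (vii) and (viii) rest on the elementary observation that $\XXX\subseteq\YYY$ implies $\comp{\YYY}\subseteq\comp{\XXX}$, so the defining relation set for $\Grot(\XXX)$ (resp.\ $\grot(\XXX)$) is \emph{larger} than the one for $\Grot(\YYY)$ (resp.\ $\grot(\YYY)$), giving a well-defined map $[X]\mapsto[X]$; the same inclusion makes the structural maps of the target space factor through structural maps of the source, yielding continuity. For the $\Cat(\XXX)\to\cat(\XXX)$ inclusion one uses symmetry of tensor product, $\chi(X,W)=\chi(W,X)$, together with $\Cat(\comp{\XXX})\subseteq\cat(\comp{\XXX})$. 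For (viii), well-definedness in the first variable follows because $\YYY\subseteq\comp{\XXX}$ places the fixed test object $Y$ inside $\cat(\comp{\XXX})$, and in the second variable from the equivalent inclusion $\XXX\subseteq\comp{\YYY}$ combined with associativity $\chi(Z,X\otimes Y)=\chi(Z\otimes X,Y)$; continuity reduces to the same associativity identity, writing each structural map of $\grot(\mmm)$ on $[X]\otimes\langle Y\rangle$ as a structural map of the factor being varied.

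The only step with any real content is (v); everything else is either Euler-characteristic bookkeeping or a soft comparison of relation sets. The anticipated obstacle is therefore the reduction to the generators $\langle R/\qqq\rangle$, specifically making sure that when one truncates a complex in $\cat(\XXX)$ and then prime-filters its homology, the resulting primes all lie in $\XXX$; this uses precisely that $\XXX$ is specialization-closed, since the associated primes appearing in a prime filtration lie in the support and $\XXX$ contains the support by hypothesis.
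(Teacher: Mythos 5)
Your proposal is correct and follows essentially the same strategy as the paper. The only substantive difference is that the paper delegates items (\textit{i})--(\textit{iii}) and the Grothendieck-group-level portion of (\textit{v}) to the cited reference on Grothendieck groups of complexes, whereas you supply direct proofs (tensoring the short exact sequence with a test complex and computing Euler characteristics, the mapping-cone trick, and the soft-truncation plus prime-filtration reduction), making the argument more self-contained; the treatment of (\textit{iv}) and (\textit{vi})--(\textit{viii}), including the continuity checks via the initial topology and the associativity of $\chi$, matches the paper's.
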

\begin{proof}
Properties \eqref{ses}, \eqref{quasi} and \eqref{shift} hold since they hold for the
corresponding Grothendieck groups; see~\cite{foxbyhalvorsen}.

We show that~\eqref{intheform}
holds for elements in $\Grot(\XXX)$; the argument for elements in
$\grot(\XXX)$ is identical. Note first that any element in 
$\Grot(\XXX)$ can be written as a sum $\sum_i r_i[X^i]$ for
various complexes $X^i$ in $\Cat(\XXX)$. By using
\eqref{shift}, we can assume that all $r_i$ are positive, and by
choosing a greatest common divisor, we can write the element in the
form $r\sum_i a_i[X^i]$ for a rational number $r$ and
positive integers $a_i$. Because of~\eqref{ses}, a sum of two elements
represented by complexes is equal to the element represented by their
direct sum, and hence the sum $\sum_i a_i[X^i]$ can be replaced by a
single element $[X]$, where $X$ is the direct sum over $i$ of
$a_i$ copies of $X^i$.

Property~\eqref{generatedg} holds since it holds for the corresponding
Grothendieck group. This is easily seen by using short exact sequences
to transform a complex in $\cat(\XXX)$ first into a bounded complex,
then into the alternating sum of its homology
modules, and finally, by taking filtrations, into a linear combination
of modules in the form $R/\qqq$ for prime ideals $\qqq\in\XXX$.

The $\QQ$-vector space isomorphism in~\eqref{iso} is an immediate
consequence of the group isomorphism $K_0(\cat(\mmm))\overset{\cong}{\to}\ZZ$ induced by the Euler characteristic on
Grothendieck groups. It is straightforward to see that it is a
homeomorphism.

To see~\eqref{inclusion}, it
suffices to note that, since $\cat(\comp{\XXX})$ contains
$\Cat(\comp{\XXX})$ as well as $\cat(\comp{\YYY})$ whenever
$\XXX\subseteq\YYY$ (because then $\comp{\YYY}\subseteq\comp{\XXX}$), any relation in $\Grot(\XXX)$ is also a relation
in $\grot(\XXX)$ and $\Grot(\YYY)$.

Finally, \eqref{tensor} simply follows from the definition of Grothendieck
spaces. As an example, we show that the second map in~\eqref{tensor}
is a homomorphism in the first variable. So fix
$Y\in\Cat(\YYY)$ and let $Z\in\cat(\comp{\{\mmm\}})=\cat(\spec R)$ be
arbitrary. Then
\[
\chi(-\otimes_R Y,Z)=\chi(-,Y\otimes_R Z)\colon \Cat(\XXX)\to\QQ,
\]
which shows that the map $\Grot(\XXX)\to\Grot(\mmm)$ given by $[X]\mapsto [X\otimes_R
Y]$ is well-defined, $\QQ$-linear and continuous.
\end{proof}
The homomorphisms in
Proposition~\ref{grothendieckobs}\eqref{inclusion} are called \emph{inclusion homomorphisms} although they in general are not injective. The image under an inclusion
homomorphism of an element $\alpha$ will generally be denoted $\overline\alpha$.

Let $\XXX,\YYY\subseteq\spec R$ with $\YYY\subseteq\comp\XXX$ and
suppose that $X\in\Cat(\XXX)$ and $Y\in\cat(\YYY)$. Then
\[
\chi(X,Y) = \chi(X\otimes_R Y) = \chi(\langle X\otimes_R Y
\rangle ) = \chi([X]\otimes\langle Y\rangle),
\]
which is the image in $\QQ$ of $[X]\otimes\langle Y\rangle$ under the
isomorphism $\grot(\mmm)\cong \QQ$ induced by the Euler characteristic.
Thus, the intersection multiplicity of complexes generalizes to the bi-homomorphism
$\Grot(\XXX)\times\grot(\YYY)\to \grot(\mmm)$ from Proposition~\ref{grothendieckobs}\eqref{tensor}.

\begin{defi}
Given $\XXX\subseteq\spec R$ and elements $\alpha\in\Grot(\XXX)$ and $\beta\in\grot(\XXX)$, the
\emph{dimensions} of $\alpha$ and $\beta$ are defined as
\begin{gather*}
\dim\alpha=\inf\big\{\dim X\mid \text{$\alpha=r[X]$ for some
  $r\in\QQ$ and $X\in\Cat(\XXX)$}\big\}\rlap{ and}\\
\dim\beta=\inf\big\{\dim Y\mid \text{$\beta=s[Y]$ for some
  $s\in\QQ$ and $Y\in\cat(\XXX)$}\big\}.
\end{gather*}
In particular, $\dim\alpha=-\infty$ if and only if $\alpha=0$.
\end{defi}

\begin{defi}
Suppose that $\XXX\subseteq\spec R$ and let
$\alpha\in\Grot(\XXX)$. Then $\alpha$ \emph{satisfies vanishing} if, for all 
$\beta\in\grot(\comp{\XXX})$,
$\alpha\otimes\beta=0$ whenever $\dim\beta <\codim \XXX$, and $\alpha$
satisfies \emph{weak vanishing} if, for all
$\beta\in\Grot(\comp{\XXX})$, $\overline{\alpha\otimes\beta} =0$  in
$\grot(\mmm)$ whenever $\dim\beta<\codim\XXX$.
The \emph{vanishing dimension} of
  $\alpha$ is the number
\[
\vandim \alpha = \inf\strut \Big\{u\in\ZZ \,\Big\vert\,
\begin{matrix}\text{$\alpha\otimes\beta=0$ for all
    $\beta\in\grot(\comp\XXX)$}\\ \text{with $\dim\beta<\codim\XXX-u$}\end{matrix}\Big\}.
\]
In particular, $\vandim\alpha=-\infty$ if and only if $\alpha=0$, and
$\vandim\alpha\leq 0$ if and only if $\alpha$ satisfies vanishing.
\end{defi}

To satisfy vanishing and weak vanishing for an element $\alpha$
generalizes the usual terminology for complexes: if $X\in\Cat(\XXX)$,
then the element $[X]$ in $\Grot(\XXX)$ 
satisfies vanishing exactly when $\chi(X,Y)=0$ for all 
$Y\in\cat(\comp\XXX)$. Likewise, $[X]$
satisfies weak vanishing exactly when $\chi(X,Y)=0$ for all $Y\in\Cat(\comp\XXX)$.

The vanishing dimension measures, in a sense, 
how far an element is from satisfying vanishing: if $\vandim
[X]=u$, then $u$ is the smallest integer such that
$\chi(X,Y)=0$ for all $Y\in\cat(\comp\XXX)$ with $\dim X+\dim Y<\dim R-u$.

\begin{rema}\label{foxbyexam}
A result by Foxby~\cite{foxbyM} shows that vanishing holds for all
$\alpha\in\Grot(\XXX)$ whenever $\codim\XXX \leq 2$. In particular, for all $\alpha\in\Grot(\XXX)$,
\[
\vandim\alpha\leq \max(0,\codim\XXX-2).
\]
\end{rema}

\begin{prop}\label{conditions}
Suppose that $\XXX\subseteq\spec R$, let
$\alpha\in\Grot(\XXX)$ and let $u$ be a non-negative integer. The
following are equivalent.
\begin{enumerate}
\item \label{D1} $\alpha\otimes\beta=0$ for all $\beta\in\grot(\comp{\XXX})$ with
  $\dim\beta <\codim\XXX-u$.
\item \label{Dx} $\overline\alpha$ satisfies vanishing in $\Grot(\YYY)$ for all
  $\YYY\supseteq\XXX$ with $\codim\YYY =\codim\XXX-u$.
\item \label{D4} $\overline\alpha=0$ in $\Grot(\YYY)$ for all
  $\YYY\supseteq\XXX$ with $\codim\YYY <\codim\XXX-u$.
\item \label{D5} $\overline\alpha=0$ in $\Grot(\YYY)$ for all
  $\YYY\supseteq\XXX$ with $\codim\YYY = \codim\XXX -u-1$.
\item $\vandim\alpha \leq u$.
\end{enumerate}
\end{prop}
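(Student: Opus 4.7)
The plan is to take \eqref{D1}$\Leftrightarrow$(v) directly from the definition of $\vandim$, and to prove the rest using two standard observations about the constructions above. First, the tensor product is compatible with the inclusion homomorphisms in the sense that if $\XXX\subseteq\YYY$ and $\gamma\in\grot(\comp\YYY)$, then, viewing $\gamma$ also as an element of $\grot(\comp\XXX)$ via $\comp\YYY\subseteq\comp\XXX$, one has $\overline{\alpha}\otimes\gamma=\alpha\otimes\gamma$ in $\grot(\mmm)$ (both are represented by $[X\otimes_R Y]$). Second, the defining relations of $\Grot(\YYY)$ combined with the isomorphism $\chi\colon\grot(\mmm)\overset{\cong}{\to}\QQ$ of Proposition~\ref{grothendieckobs}\eqref{iso} imply that $\overline{\alpha}=0$ in $\Grot(\YYY)$ if and only if $\overline{\alpha}\otimes\gamma=0$ for every $\gamma\in\grot(\comp\YYY)$.

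Granting these, the forward directions are formal. For \eqref{D1}$\Rightarrow$\eqref{Dx}, any $\gamma\in\grot(\comp\YYY)$ figuring in the vanishing condition for $\overline{\alpha}$ satisfies $\dim\gamma<\codim\YYY=\codim\XXX-u$ and lies in $\grot(\comp\XXX)$, so \eqref{D1} forces $\overline{\alpha}\otimes\gamma=\alpha\otimes\gamma=0$. For \eqref{D1}$\Rightarrow$\eqref{D4}, \emph{every} $\gamma\in\grot(\comp\YYY)$ automatically satisfies $\dim\gamma\leq\dim\comp\YYY\leq\codim\YYY<\codim\XXX-u$, since by definition $\dim R/\qqq\leq\codim\YYY$ for each $\qqq\in\comp\YYY$; the same argument then yields $\overline{\alpha}\otimes\gamma=0$ for all $\gamma$, so $\overline{\alpha}=0$ in $\Grot(\YYY)$. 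The implication \eqref{D4}$\Rightarrow$\eqref{D5} is trivial.

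The reverse directions all rest on a small bookkeeping lemma: if $\XXX\subseteq\YYY'$ are specialization-closed and $\dim\XXX\leq d\leq\dim\YYY'$, then $\YYY=\XXX\cup\{\qqq\in\YYY':\dim R/\qqq\leq d\}$ is specialization-closed with $\XXX\subseteq\YYY\subseteq\YYY'$ and $\dim\YYY=d$, where the lower bound on $\dim\YYY$ is obtained by descending along a saturated chain in the Noetherian ring $R$ from any prime of $\YYY'$ of dimension above $d$. To prove \eqref{Dx}$\Rightarrow$\eqref{D1} and \eqref{D5}$\Rightarrow$\eqref{D1}, I write an arbitrary $\beta\in\grot(\comp\XXX)$ of dimension below $\codim\XXX-u$ as $s[Y]$ via Proposition~\ref{grothendieckobs}\eqref{intheform} and apply the lemma with $\YYY':=\comp{\supp Y}$: since $\supp Y$ is closed, $\dim\comp{\supp Y}=\dim R-\dim\supp Y\geq\dim\XXX+u+1$, and $\XXX\subseteq\comp{\supp Y}$ by the duality $\YYY\subseteq\comp\XXX\Leftrightarrow\XXX\subseteq\comp\YYY$, so choosing $d=\dim\XXX+u$ (respectively $d=\dim\XXX+u+1$) yields a $\YYY\supseteq\XXX$ of codimension $\codim\XXX-u$ (respectively $\codim\XXX-u-1$) with $\beta\in\grot(\comp\YYY)$; the hypothesis then gives $\alpha\otimes\beta=\overline{\alpha}\otimes\beta=0$. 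Applied inside a given $\YYY'$ as in \eqref{D4}, the same lemma produces an intermediate $\YYY$ of codimension exactly $\codim\XXX-u-1$ through which the inclusion $\Grot(\YYY)\to\Grot(\YYY')$ factors, proving \eqref{D5}$\Rightarrow$\eqref{D4}. The only real obstacle is keeping the codimension bookkeeping straight through the duality $\comp{(-)}$ and verifying that a Noetherian local ring contains primes of every intermediate dimension below that of a given prime; once the dimension lemma is recorded, each implication is a short accounting.
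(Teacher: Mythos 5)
Your proof is correct, and since the paper dismisses this result with ``Straightforward,'' your write-up is essentially the elaboration the author left to the reader. The two reductions you isolate --- compatibility of $\otimes$ with inclusion homomorphisms, and the characterization ``$\overline\alpha=0$ in $\Grot(\YYY)$ iff $\overline\alpha\otimes\gamma=0$ for all $\gamma\in\grot(\comp\YYY)$'' coming from the defining relations together with Proposition~\ref{grothendieckobs}\eqref{iso} --- are exactly the right mechanism, and the sandwiching argument via $\YYY':=\comp{\supp Y}$ correctly exploits the duality $\YYY\subseteq\comp\XXX\Leftrightarrow\XXX\subseteq\comp\YYY$ and the equality $\dim\supp Y+\dim\comp{\supp Y}=\dim R$ for closed $\supp Y$. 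One small caution on your ``dimension lemma'': phrasing it as ``descending along a saturated chain'' is slightly misleading, since in a non-catenary Noetherian local ring a single saturated chain can skip coheights; the honest argument is an infinite descent --- from any $\ppp\in\YYY'$ with $\dim R/\ppp>d$, pass to an immediate overprime $\ppp_1$ in a chain of length $\dim R/\ppp$ reaching $\mmm$, note $d\le\dim R/\ppp_1<\dim R/\ppp$, and repeat until equality. With that clarification, every step you give goes through and the five conditions are linked exactly as you describe.
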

\begin{proof} 
Straightforward.
\end{proof}

\begin{rema}\label{vdimineq} Suppose that $\XXX\subseteq\YYY$,
  let $\alpha\in\Grot(\XXX)$ and denote by $\overline\alpha$ the image in
  $\Grot(\YYY)$ of $\alpha$ under the inclusion homomorphism. Then 
\[
\vandim\overline\alpha \leq \vandim\alpha - (
\codim\XXX - \codim\YYY).
\]
It is always possible to find a $\YYY\supseteq\XXX$ with
any given co-dimension larger than or equal to $\codim\XXX-\vandim\alpha$ and
smaller than or equal to $\codim\XXX$ such that the above is
an equality.
\end{rema}

\section{Frobenius and vanishing dimension}
\begin{assu}
Throughout this section, $R$ is assumed to be complete of prime
characteristic $p$, and $k$ is assumed to be a perfect
field.\footnote{Note that, although the assumptions that $R$ be
  complete 
and $k$ be perfect may seem
restrictive, they really are not when it comes to dealing with
intersection multiplicities; for further details, see Dutta~\cite[p.\ 425]{duttamultiplicity}.}
\end{assu}

The Frobenius ring homomorphism $f\colon R\to R$ is given by
$f(r)=r^p$; the $e$-fold composition of $f$ is the ring
homomorphism $f^e\colon R\to R$ given by $f(r)=r^{p^e}$. We denote $\f{e}R$ the bi-$R$-algebra $R$ having the structure of an $R$-algebra
from the left by $f^e$ and from the right by the identity map: that
is, if $x\in \f{e}R$ and $r,s\in R$, then $r\cdot x\cdot s =
r^{p^e}xs$.

\begin{defi}
Two functors, $\f{e}(-)$ and $\frob_R^e$, are defined on the category of $R$-modules by
\[
\f{e}(-) = \f{e}R\otimes_R - \quad \text{and} \quad \frob^e_R(-)=-\otimes_R \f{e}R,
\]
where, for a module $M$, $\f{e}M$ is viewed through its
\emph{left} structure, whereas $\frob_R^e(M)$ is viewed through its
\emph{right} structure.
The functor $\frob_R$ is called the \emph{Frobenius functor}.
\end{defi}

Like the usual intersection multiplicity, the definition of Dutta
multiplicity can be extended to a more general setting: for subsets 
$\XXX,\YYY\subseteq\spec R$ with $\YYY\subseteq\comp\XXX$ and complexes
$X\in\Cat(\XXX)$ and $Y\in\cat(\YYY)$, the Dutta multiplicity
of $X$ and $Y$ is defined as
\[
\chi_\infty(X,Y)=\lim_{e\to\infty}\frac{1}{p^{e\codim X}}\chi(\frob_R^e(X),Y).
\]

\begin{prop}\label{frobeniusobs}
The following hold.
\begin{enumerate}
\item \label{exact}For all $\XXX\subseteq\spec R$, $\f{e}(-)$ defines an exact
  functor $\cat(\XXX)\to\cat(\XXX)$.
\item For all $\XXX\subseteq\spec R$, $\frob_R$ defines a functor
  $\Cat(\XXX)\to\Cat(\XXX)$.
\item $\f{e}(-)$ and $\frob_R^e$ are the compositions of $e$ copies of $\f{}(-)$ and $\frob_R$, respectively.
\end{enumerate}
\end{prop}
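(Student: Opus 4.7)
The three statements unpack the bimodule structure of $\f{e}R$ and exploit the standing assumptions that $R$ is complete of characteristic $p$ with perfect residue field.

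For~\eqref{exact}, I would first observe that $\f{e}R$ is free of rank one as a right $R$-module (the right action being the identity), so $\f{e}(-)=\f{e}R\otimes_R-$ is exact on all $R$-modules. Exactness yields $\H_i(\f{e}X)=\f{e}\H_i(X)$, so homological boundedness is preserved. For finite generation of the homology modules, the standing assumption enters: by the Cohen structure theorem a complete local ring of characteristic $p$ with perfect residue field is a quotient of a power series ring $k[[x_1,\ldots,x_n]]$, and such a quotient is $F$-finite, i.e., $\f{}R$ is finitely generated as a left $R$-module. By induction $\f{e}R$ is finitely generated, hence so is $\f{e}M=\f{e}R\otimes_R M$ when $M$ is. Support is preserved because $f^e$ induces the identity on $\spec R$ (since $r^{p^e}\in\ppp$ iff $r\in\ppp$ for a prime $\ppp$), giving $\supp\f{e}M=\supp M$ and hence $\supp\f{e}X\subseteq\XXX$.

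For the second statement, $\frob_R(R)=R\otimes_R\f{}R$ viewed through the right (identity) structure of $\f{}R$ is just $R$, so $\frob_R(R^n)=R^n$; additivity then shows that a summand of a free module goes to a summand of a free module, preserving the class of finitely generated projectives. Boundedness is immediate. For support, if $\ppp\notin\XXX$ then $X_\ppp$ is a bounded exact complex of finitely generated free $R_\ppp$-modules, which is null-homotopic; tensoring a null-homotopic complex with any bimodule keeps it null-homotopic, so $\frob_R(X)_\ppp$ is exact and $\ppp\notin\supp\frob_R(X)$.

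For the third statement, associativity of the tensor product reduces everything to a bimodule isomorphism $\f{}R\otimes_R\f{e-1}R\cong\f{e}R$, which I would produce via the map $a\otimes b\mapsto a^{p^{e-1}}b$. Balance, left- and right-equivariance are straightforward checks from the definitions of the two structures on each factor, and bijectivity follows by using the balance relation $ar\otimes b=a\otimes r^{p^{e-1}}b$ to rewrite every elementary tensor as $1\otimes c$. Induction on $e$ then gives the identification of $\f{e}(-)$ with $\f{}(-)$ iterated $e$ times, and the argument for $\frob_R^e$ is entirely symmetric, using the same bimodule isomorphism with the tensor factors swapped. The step requiring most care is the support-preservation in the projective case, where one must translate the bimodule formalism into the concrete fact that contractibility of a bounded complex of finitely generated free modules over a local ring is preserved under tensor product; the rest is definitional bookkeeping that simply requires keeping the left and right Frobenius structures straight.
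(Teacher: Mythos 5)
Your argument is correct and supplies, via the standard bimodule formalism, exactly the details that the paper leaves to Peskine--Szpiro and Roberts. In particular you correctly locate where the standing assumptions enter --- completeness together with a perfect residue field give $F$-finiteness via the Cohen structure theorem, which is what makes $\f{e}(-)$ preserve finite generation of homology --- and your bimodule isomorphism $\f{}R\otimes_R\f{e-1}R\cong\f{e}R$, $a\otimes b\mapsto a^{p^{e-1}}b$ (with its right-sided mirror for $\frob_R^e$), is the right device for the composition statement.
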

\begin{proof}
All properties are readily verified. For further details, see, for
example, Peskine and Szpiro~\cite{peskineszpiro} or Roberts~\cite{roberts}.
\end{proof}

According to Proposition~\ref{frobeniusobs}\eqref{exact}, for any complex $Z\in\cat(\mmm)$,
\[
\chi(\f{e}Z)=\chi(Z)\length(\f{e}k)=\chi(Z),
\]
where the last equation follows since $k\cong \f{e}k$. Now, suppose
that $X\in\Cat(\XXX)$ and $Y\in\cat(\comp{\XXX})$. It is not hard
to see that $\f{e}(\frob_R^e(X)\otimes_R Y)\cong X\otimes_R \f{e}Y$,
and it 
follows that
\begin{equation}\label{noget}
\chi(\frob_R^e(-), Y) = \chi(-, \f{e}Y)\colon\Cat(\XXX)\to\QQ,
\end{equation}
which implies that the map $\Grot(\XXX)\to\Grot(\XXX)$ given by
$[X]\mapsto [\frob^e_R(X)]$ is well-defined, $\QQ$-linear and
continuous; in other words, it is an endomorphism of Grothendieck
spaces. 

\begin{defi}
Given $\XXX\subseteq\spec R$ and $e\in\NN_0$, the endomorphism on $\Grot(\XXX)$ induced by $\frob_R^e$ is
denoted $\frob_\XXX^e$. Further, we define the endomorphism
\[
\Phi^e_\XXX = \frac{1}{p^{e\codim \XXX}}\frob_\XXX^e
\]
on $\Grot(\XXX)$. For $\XXX=\{\mmm\}$ we simply write $\frob_\mmm^e$ and
$\Phi^e_\mmm$.
\end{defi}

\begin{prop}\label{vanishing}
Suppose that $\XXX\subseteq\spec R$ and let $\alpha\in\Grot(\XXX)$. Then
$\alpha$ satisfies vanishing if and only if $\alpha = \Phi_\XXX(\alpha)$.
\end{prop}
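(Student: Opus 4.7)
The plan is to translate both conditions into equivalent pairing conditions on generators $\langle R/\qqq\rangle$ and then prove the equivalence by induction on $\dim R/\qqq$. Using the identity~\eqref{noget} and the Euler-characteristic isomorphism of Proposition~\ref{grothendieckobs}\eqref{iso}, the equality $\alpha = \Phi_\XXX(\alpha)$ unwinds to
\[
p^c\, \chi(\alpha, Y) = \chi(\alpha, \f{}Y) \quad \text{for all } Y \in \cat(\comp{\XXX}),
\]
where $c = \codim \XXX$. By Proposition~\ref{grothendieckobs}\eqref{generatedg} and $\QQ$-linearity, it is enough to verify this identity on $Y = R/\qqq$ for $\qqq \in \comp{\XXX}$; similarly, vanishing of $\alpha$ reduces to $\chi(\alpha, R/\qqq) = 0$ for all such $\qqq$ with $\dim R/\qqq < c$.

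The crux of the argument is a Frobenius-filtration identity in $\grot(\comp{\XXX})$: I would show that
\[
\langle \f{}(R/\qqq) \rangle = p^{\dim R/\qqq}\, \langle R/\qqq \rangle + \sum_i \langle R/\ppp_i \rangle,
\]
where the primes $\ppp_i$ lie in $\var(\qqq) \subseteq \comp{\XXX}$ and satisfy $\dim R/\ppp_i < \dim R/\qqq$. Since $\f{}(R/\qqq)$ is annihilated by $\qqq$, any prime filtration has successive quotients $R/\ppp$ with $\ppp \in \var(\qqq)$, and the number of those quotients isomorphic to $R/\qqq$ equals $\length_{R_\qqq} \f{}(R/\qqq)_\qqq = [\kappa(\qqq):\kappa(\qqq)^p]$. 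Completeness of $R$ together with the perfect residue field hypothesis makes $R$ an $F$-finite ring, and the $p$-degree formula for $F$-finite fields then yields $[\kappa(\qqq):\kappa(\qqq)^p] = p^{\dim R/\qqq}$, using that the transcendence degree of $\kappa(\qqq)$ over $k$ agrees with $\dim R/\qqq$. I expect this computation to be the main obstacle, since it requires invoking $F$-finiteness through the Cohen structure theorem and identifying transcendence degree with Krull dimension.

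Granted the filtration identity, the rest is formal. For the forward direction, if $\alpha$ satisfies vanishing then every $\chi(\alpha, R/\ppp_i)$ is zero, and the reformulated equation $p^c \chi(\alpha, R/\qqq) = \chi(\alpha, \f{}(R/\qqq))$ is trivial when $\dim R/\qqq < c$ (both sides vanish) and tautological when $\dim R/\qqq = c$ (both sides equal $p^c \chi(\alpha, R/\qqq)$). For the converse, the case $c = 0$ is vacuous, so I may assume $c \geq 1$ and induct on $d = \dim R/\qqq$: in the base case $d = 0$ the identification $\f{}k \cong k$ gives $(p^c - 1)\chi(\alpha, k) = 0$; at the inductive step the inductive hypothesis kills each $\chi(\alpha, R/\ppp_i)$, so the filtration identity and the hypothesis produce $(p^c - p^d)\chi(\alpha, R/\qqq) = 0$, forcing $\chi(\alpha, R/\qqq) = 0$ because $d < c$.
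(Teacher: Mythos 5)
Your proof follows essentially the same route as the paper's: reduce to generators $\langle R/\qqq\rangle$ via Proposition~\ref{grothendieckobs}\eqref{generatedg}, use identity~\eqref{noget} to convert $\Phi_\XXX$ into $\chi(\alpha,\f{}(-))$, decompose $\langle\f{}(R/\qqq)\rangle$ as $p^{\dim R/\qqq}\langle R/\qqq\rangle$ plus a lower-dimensional term (the paper packages your sum of $\langle R/\ppp_i\rangle$'s into a single short exact sequence with cokernel $Q$ of smaller dimension, citing Roberts), and then argue forward trivially and backward by induction or minimality on $\dim R/\qqq$.

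One side remark worth flagging: your justification that $\operatorname{trdeg}_k\kappa(\qqq)=\dim R/\qqq$ is not correct. For a complete local domain such as $k[[x_1,\dots,x_m]]$, the fraction field $k((x_1,\dots,x_m))$ typically has \emph{infinite} transcendence degree over $k$ (one can find algebraically independent power series). The conclusion $[\kappa(\qqq):\kappa(\qqq)^p]=p^{\dim R/\qqq}$ is still correct, but the right argument is Kunz's $p$-degree formula for an $F$-finite ring, $\log_p[\kappa(\qqq):\kappa(\qqq)^p]=\log_p[k:k^p]+\dim R/\qqq$, together with $k=k^p$; or, as the paper does, simply cite Roberts (section 7.3), who proves that $R/\qqq$ is torsion-free of rank $p^{\dim R/\qqq}$ over its Frobenius image.
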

\begin{proof}
According to Proposition~\ref{grothendieckobs}\eqref{intheform}, we can assume that
$\alpha$ is in the form $\alpha=r[X]$ for $r\in\QQ$ and  $X\in\Cat(\XXX)$. By
Proposition~\ref{grothendieckobs}\eqref{generatedg} and the definition
of Grothendieck spaces, the element $\alpha$ is completely determined
by the intersection multiplicities $\chi(\alpha,R/\qqq)$ for prime
ideals $\qqq\in\comp\XXX$. Given such a prime ideal $\qqq$, set $m=\dim R/\qqq$
and note that, since $R/\qqq$ is a complete domain of characteristic
$p$ and with
perfect residue field, $R/\qqq$ is torsion-free of rank $p^m$ over
$\f{}(R/\qqq)$; see Roberts~\cite[section~7.3]{roberts}. Thus, there
is a short exact sequence
\[
0\to (R/\qqq)^{p^m}\to \f{}(R/\qqq)\to Q\to 0,
\]
where $Q$ is a finitely generated module with $\dim Q<m$. By applying~\eqref{noget}, we get 
\[
\chi(\frob_R(X),R/\qqq)=p^m\chi(X,R/\qqq)+\chi(X,Q).
\]
Setting $t=\codim\XXX\geq m$, this means that
\begin{equation}\label{phialpha}
\chi(\Phi_\XXX(\alpha),R/\qqq)=p^{m-t}\chi(\alpha,R/\qqq)+p^{-t}\chi(\alpha,Q).
\end{equation}

Now, if $\alpha$ satisfies vanishing, formula~\eqref{phialpha} shows
that $\alpha$ and $\Phi_\XXX(\alpha)$ yield the same intersection
multiplicities with $R/\qqq$ for all $\qqq\in\comp\XXX$, which means that
$\alpha=\Phi_\XXX(\alpha)$. Conversely, if $\alpha=\Phi_\XXX(\alpha)$,
then formula~\eqref{phialpha} implies that
\[
(p^t-p^m)\chi(\alpha,R/\qqq)=\chi(\alpha,Q),
\]
which means that $\alpha$ satisfies vanishing: for if this were not
the case, one could choose $\qqq\in\comp\XXX$ with $m=\dim R/\qqq<t$ minimal such
that $\chi(\alpha,R/\qqq)\neq 0$, and minimality of $m$ would then imply that
$\chi(\alpha,Q)=0$ which gives a contradiction.
\end{proof}

\begin{theo}\label{maintheo}
Suppose that $\XXX\subseteq\spec R$, let
$\alpha\in\Grot(\XXX)$ and suppose that $u$ is a
non-negative integer with $u\geq \vandim\alpha$. Then
\begin{equation}\label{recursionformula}
(p^u\Phi_\XXX-\id)\circ \cdots \circ (p\Phi_\XXX-\id) \circ (\Phi_\XXX-\id)(\alpha) =0.
\end{equation}
Further, there exists a decomposition $\alpha=\alpha^{(0)}+\cdots +
\alpha^{(u)}$ in which
each $\alpha^{(i)}$ is either zero or an eigenvector for $\Phi_\XXX$
with eigenvalue $1/p^i$. The elements
$\alpha^{(0)},\dots ,\alpha^{(u)}$ can be recursively defined by
\[
\alpha^{(0)}=\lim_{e\to\infty} \Phi_\XXX^e(\alpha) \quad\text{and}\quad \alpha^{(i)}=
\lim_{e\to\infty}p^{ie}\Phi^e_\XXX(\alpha-(\alpha^{(0)}+\cdots +\alpha^{(i-1)})),
\]
and there is a formula
\begin{equation}\label{generalformula}
\begin{pmatrix} \alpha^{(0)} \\ \vdots \\ \alpha^{(u)} \end{pmatrix} =
 \begin{pmatrix} 1 & 1  & \cdots & 1 \\ 
 1 & 1/p & \cdots & 1/p^u \\
 \vdots & \vdots & \ddots & \vdots \\
 1 & 1/p^{u} & \cdots &
 1/p^{u^2}\end{pmatrix}^{\!\!\!-1} \!\!\!
 \begin{pmatrix} \alpha \\ \Phi_\XXX(\alpha) \\ \vdots \\
   \Phi_\XXX^u(\alpha) \end{pmatrix} .
\end{equation}
\end{theo}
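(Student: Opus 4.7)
My plan is to prove the polynomial identity~\eqref{recursionformula} by induction on $u$, and then derive the eigenvector decomposition, the matrix formula~\eqref{generalformula}, and the limit/recursive formulas for the $\alpha^{(i)}$ as formal consequences, exploiting that the annihilating polynomial $\prod_{i=0}^{u}(p^ix-1)$ has the $u+1$ distinct roots $1,1/p,\ldots,1/p^u$.

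The induction has base case $u=0$ supplied directly by Proposition~\ref{vanishing}. For the inductive step the key observation is that $\beta:=(p^u\Phi_\XXX-\id)(\alpha)$ has $\vandim\beta\leq u-1$, after which the induction hypothesis applied to $\beta$, combined with the fact that all factors $(p^i\Phi_\XXX-\id)$ commute inside $\QQ[\Phi_\XXX]$, gives~\eqref{recursionformula}. To verify the drop I would use Proposition~\ref{grothendieckobs}\eqref{generatedg} to reduce to checking $\chi(\beta,R/\qqq)=0$ for every $\qqq\in\comp\XXX$ with $m:=\dim R/\qqq\leq t-u$, where $t:=\codim\XXX$, and then reuse formula~\eqref{phialpha} from the proof of Proposition~\ref{vanishing} to obtain
\[
\chi(\beta,R/\qqq)=(p^{u+m-t}-1)\chi(\alpha,R/\qqq)+p^{u-t}\chi(\alpha,Q),
\]
with $\dim Q<m\leq t-u$. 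The hypothesis $\vandim\alpha\leq u$ kills the $\chi(\alpha,Q)$ term; the coefficient $p^{u+m-t}-1$ vanishes at the boundary $m=t-u$, and $\chi(\alpha,R/\qqq)=0$ when $m<t-u$, so $\chi(\beta,R/\qqq)=0$ in either case. This calibration between the factor $p^u$ and the codimension threshold is what I expect to be the main obstacle.

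Once~\eqref{recursionformula} is in hand, I would introduce the Lagrange polynomials $L_i(x)=\prod_{j\neq i}(p^jx-1)/(p^{j-i}-1)$ at the nodes $1/p^j$, satisfying $L_i(1/p^j)=\delta_{ij}$ and $\sum_iL_i\equiv 1\pmod{\prod_i(p^ix-1)}$, and set $\alpha^{(i)}:=L_i(\Phi_\XXX)(\alpha)$. Then $\sum_i\alpha^{(i)}=\alpha$ by~\eqref{recursionformula}, while $(p^i\Phi_\XXX-\id)(\alpha^{(i)})=0$ because $(p^ix-1)L_i(x)$ is a scalar multiple of the annihilating polynomial; hence each $\alpha^{(i)}$ is either zero or an eigenvector for $\Phi_\XXX$ with eigenvalue $1/p^i$. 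Applying $\Phi_\XXX^k$ to the decomposition produces $\Phi_\XXX^k\alpha=\sum_jp^{-jk}\alpha^{(j)}$, which is precisely the Vandermonde system~\eqref{generalformula}, invertible because the nodes $1/p^j$ are pairwise distinct.

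For the recursive/limit formulas I would exploit that $\Grot(\XXX)$ carries the initial topology of the evaluation maps $\chi(-,Y)$, $Y\in\cat(\comp\XXX)$. Pairing $\Phi_\XXX^e\alpha=\sum_jp^{-je}\alpha^{(j)}$ against any such $Y$ yields $\chi(\Phi_\XXX^e\alpha,Y)\to\chi(\alpha^{(0)},Y)$ as $e\to\infty$, which by definition of the topology means $\alpha^{(0)}=\lim_e\Phi_\XXX^e(\alpha)$ in $\Grot(\XXX)$. An identical computation applied to $p^{ie}\Phi_\XXX^e(\alpha-\alpha^{(0)}-\cdots-\alpha^{(i-1)})=\alpha^{(i)}+\sum_{j>i}p^{(i-j)e}\alpha^{(j)}$ then yields the recursive expression for each $\alpha^{(i)}$.
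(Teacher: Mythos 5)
Your proof is correct, and it deviates substantively from the paper's in two places. In the induction proving~\eqref{recursionformula}, the paper sets $\beta=(p^u\Phi_\XXX-\id)\circ\cdots\circ(p\Phi_\XXX-\id)(\alpha)$, reduces via Proposition~\ref{vanishing} to showing that $\beta$ satisfies vanishing, and verifies this by passing through the inclusion into $\Grot(\YYY)$ for every $\YYY\supseteq\XXX$ with $\codim\YYY=\codim\XXX-1$, using $\overline{\Phi_\XXX(\alpha)}=p^{-1}\Phi_\YYY(\overline\alpha)$ together with Remark~\ref{vdimineq} and Proposition~\ref{conditions}. You instead strip off a single factor, show directly that $\vandim\bigl((p^u\Phi_\XXX-\id)(\alpha)\bigr)\leq u-1$ by reusing formula~\eqref{phialpha} together with the generators $\langle R/\qqq\rangle$ from Proposition~\ref{grothendieckobs}\eqref{generatedg}, and then close the induction inside the same space $\Grot(\XXX)$; the vanishing of the coefficient $p^{u+m-t}-1$ at the threshold $m=t-u$ plays exactly the role that the co-dimension shift plays in the paper's argument, and the computation is sound because~\eqref{phialpha} extends $\QQ$-linearly to all of $\Grot(\XXX)$. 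For the eigendecomposition, the paper treats~\eqref{recursionformula} as a linear recurrence in $e$ with characteristic polynomial $(p^ux-1)\cdots(x-1)$ having distinct roots and reads off the closed form $\Phi^e_\XXX(\alpha)=\sum_j p^{-je}\alpha^{(j)}$ abstractly; you instead define $\alpha^{(i)}=L_i(\Phi_\XXX)(\alpha)$ via the Lagrange basis at the nodes $1/p^j$, deriving the decomposition and the eigenvector property from polynomial identities modulo the annihilating polynomial, which has the side benefit of exhibiting each $\alpha\mapsto\alpha^{(i)}$ as an explicit operator in $\QQ[\Phi_\XXX]$, hence manifestly $\QQ$-linear and continuous. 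The Vandermonde inversion and the limit formulas, justified through the evaluation maps $\chi(-,Y)$ that define the initial topology, then agree with the paper's treatment.
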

\begin{proof}
We prove \eqref{recursionformula} by induction on $u$. The case $u =0$ is
trivial since Proposition~\ref{vanishing} in this situation yields that
$(\Phi_\XXX-\id)(\alpha)=0$. Now, suppose that $u>0$ and that the
formula holds for smaller values of $u$. By
Proposition~\ref{vanishing} and commutativity of the involved maps, equation
\eqref{recursionformula} holds if and only if vanishing holds for the element
\[
\beta = (p^u\Phi_\XXX-\id)\circ \cdots \circ (p\Phi_\XXX-\id)(\alpha) .
\]
Now, let $\YYY\subseteq\spec R$ with 
$\YYY\supseteq\XXX$ and $\codim\YYY=\codim\XXX-1$. Then, in
$\Grot(\YYY)$, $\overline{\Phi_\XXX(\alpha)}=p^{-1}\Phi_{\YYY}(\overline\alpha)$,
and hence
\[
\overline\beta = (p^{u-1}\Phi_{\YYY}-\id)\circ \cdots \circ
(p\Phi_{\YYY}-\id) \circ 
(\Phi_{\YYY}-\id)(\overline\alpha) =0 ,
\]
where the last equation follows by induction, since
$\vandim\overline\alpha\leq u-1$ by Remark~\ref{vdimineq}. According
to Proposition~\ref{conditions}, this
proves that $\beta$ satisfies vanishing.

By applying $\Phi_\XXX^{e-u}$ to \eqref{recursionformula}, we get a recursive formula to compute
$\Phi_\XXX^{e+1}(\alpha)$ from $\Phi_\XXX^e(\alpha),\dots
,\Phi_\XXX^{e-u}(\alpha)$. The characteristic polynomial for the
recursion is 
\[
(p^ux-1)\cdots (px-1)(x-1),
\]
which has $u+1$ distinct roots, namely $1,1/p,\dots ,1/p^u$. Thus, there is a general formula
\begin{equation}\label{oldformula}
\Phi_\XXX^e(\alpha) = \alpha^{(0)} + \frac{1}{p^e}\alpha^{(1)} +\cdots +\frac{1}{p^{ue}}\alpha^{(u)}
\end{equation}
for suitable $\alpha^{(0)} ,\dots ,\alpha^{(u)}\in\Grot(\XXX)$, where each
$\alpha^{(i)}$ satisfies
\begin{equation}\label{phiongamma}
\Phi^e_\XXX(\alpha^{(i)})=\frac{1}{p^{ei}}\alpha^{(i)}
\end{equation}
and hence is an eigenvector for $\Phi_\XXX$ with eigenvalue $1/p^i$.

We obtain the recursive definition of $\alpha^{(i)}$ by induction on
$i$. The case $i=0$ follows immediately from~\eqref{oldformula} by
letting $e$ go to infinity. Suppose now that $i>0$ and that the result holds for smaller values of
$i$. From~\eqref{oldformula} and~\eqref{phiongamma} we then get
\begin{align*}
p^{ie}\Phi^e(\alpha-(\alpha^{(0)}+\cdots +\alpha^{(i-1)})) &=
p^{ie}\Phi^e(\alpha^{(i)}+\cdots +\alpha^{(u)}) \\
 &= \alpha^{(i)} +\frac{1}{p^e}\alpha^{(i+1)}+\cdots
 +\frac{1}{p^{e(u-i)}}\alpha^{(u)} ,
\end{align*}
and letting $e$ go to infinity, we obtain the desired formula.

From~\eqref{oldformula} we know that $\alpha^{(0)},\dots ,\alpha^{(u)}$ solve
the following system of equations with rational coefficients.
\begin{equation*}
\begin{matrix}
\alpha^{(0)} &+& \alpha^{(1)} &+&\cdots &+& \alpha^{(u)}
&= & \alpha \\
\alpha^{(0)} &+& \displaystyle\frac{1}{p}\alpha^{(1)} &+& \cdots &+&
\displaystyle\frac{1}{p^{u}}\alpha^{(u)} &= & \Phi_\XXX(\alpha) \\
 \vdots &&  \vdots && \ddots &&  \vdots  &&\vdots   \\
\alpha^{(0)} &+& \displaystyle\frac{1}{p^u}\alpha^{(1)} &+& \cdots &+&
\displaystyle\frac{1}{p^{u^2}}\alpha^{(u)} &= & \Phi_\XXX^u(\alpha)
\end{matrix}
\end{equation*}
Formula~\eqref{generalformula} now follows. (The matrix is the
Vandermonde matrix of the elements $1, 1/p,\dots ,1/p^u$ with determinant $\prod_{0\leq i<j\leq
  u}(1/p^j-1/p^i)\neq 0$.)
\end{proof}

\begin{rema}\label{decompositions}
It is easy to see that, for $\alpha\in\Grot(\XXX)$ and
$\beta\in\Grot(\comp\XXX)$,
\[
(\alpha\otimes\beta)^{(t)}=\sum_{i+j=t}\!\!\!\! \alpha^{(i)}\otimes\beta^{(j)}
\]
in $\Grot(\mmm)$.
In particular, $(\alpha\otimes\beta)^{(0)}=\alpha^{(0)}\otimes\beta^{(0)}$.
Suppose now that $\XXX\subseteq\YYY\subseteq\spec R$ and let
$s=\codim\XXX-\codim\YYY$. Since $\overline{\Phi_\XXX(\alpha)} =
p^{-s}\Phi_\YYY(\overline\alpha)$  in
$\Grot(\YYY)$, it follows from Theorem~\ref{maintheo} that,  in
$\Grot(\YYY)$,
$\overline{\alpha^{(i)}}= \overline\alpha^{(i-s)}$ for $i\geq s$
and $\overline{\alpha^{(i)}}=0$ for $i<s$. 
\end{rema}

\begin{rema}\label{duttacalculations}
The Dutta multiplicity of an element $\alpha\in\Cat(\XXX)$ and complexes in 
$\cat(\comp{\XXX})$ is given by applying the function
\[
\chi_\infty(\alpha,-) =
\lim_{e\to\infty}\chi(\Phi^e_\XXX(\alpha), -) =
\chi(\lim_{e\to\infty}\Phi^e_\XXX(\alpha), -) = \chi(\alpha^{(0)}, -).
\]
Thus, the Dutta multiplicity is a rational number and
we need not find a limit to compute it. 
In fact, translating Theorem~\ref{maintheo} back to the setup
with complexes $X\in\Cat(\XXX)$ and $Y\in\cat(\YYY)$, where
$\XXX=\supp X$, $\YYY=\supp Y$ and $\YYY\subseteq\comp\XXX$, we obtain the general formula
\[
\chi_\infty(X,Y) =
\begin{pmatrix} 1 & 0 & \cdots &  0 \end{pmatrix}
 \begin{pmatrix} 1 & 1  & \cdots & 1 \\ 
 p^t & p^{t-1} & \cdots & p^{t-u} \\
 \vdots & \vdots & \ddots & \vdots \\
 p^{ut} & p^{u(t-1)} & \cdots &
 p^{u(t-u)}\end{pmatrix}^{\!\!\!-1} \!\!\!\begin{pmatrix} \chi(X,Y) \\ \chi(\frob_R(X),Y) \\ \vdots \\
   \chi(\frob_R^u(X),Y) \end{pmatrix} ,
\]
where $t=\codim X$ and $u\geq\vandim [X]$ for $[X]\in\Grot(\XXX)$.
The fact that Dutta multiplicity satisfies vanishing follows
immediately from Proposition~\ref{conditionsdimu}, which extends
Proposition~\ref{conditions} by adding even more conditions that
describe what it means to have a certain vanishing dimension.
 \end{rema}

\begin{prop}\label{conditionsdimu}
Suppose that $\XXX\subseteq\spec R$, let
$\alpha\in\Grot(\XXX)$ and let $u$ be a non-negative integer. The following are equivalent.
\begin{enumerate}
\item \label{C1} $\alpha$ satisfies vanishing.
\item \label{C3} $\alpha=\alpha^{(0)}$.
\item \label{C4} $\alpha=\Phi_\XXX(\alpha)$.
\item \label{C5} $\alpha=\Phi^e_\XXX(\alpha)$ for some $e\in\NN$.
\item \label{C6} $\alpha=\lim_{e\to\infty}\Phi^e_\XXX(\alpha)$.
\end{enumerate}
Further, the following are equivalent.
\begin{enumerate}\setcounter{enumi}{5}
\item \label{D2} $\alpha=\alpha^{(0)}+\cdots +\alpha^{(u)}$.
\item \label{D3} $(p^u\Phi_\XXX -\id)\circ \cdots \circ (p\Phi_\XXX-\id)\circ
  (\Phi_\XXX -\id)(\alpha)=0$.
\item \label{D6} $\vandim\alpha \leq u$.
\end{enumerate}
\end{prop}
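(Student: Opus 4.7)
The plan is to dispatch the two blocks of equivalences in turn, with Proposition~\ref{vanishing}, Theorem~\ref{maintheo} and Proposition~\ref{conditions} doing most of the heavy lifting. For the first block I would run the cycle \eqref{C3}$\Rightarrow$\eqref{C4}$\Rightarrow$\eqref{C5}$\Rightarrow$\eqref{C6}$\Rightarrow$\eqref{C3}, together with the observation that \eqref{C1}$\Leftrightarrow$\eqref{C4} is exactly Proposition~\ref{vanishing}. The step \eqref{C3}$\Rightarrow$\eqref{C4} is immediate because $\alpha^{(0)}$ is a $\Phi_\XXX$-eigenvector with eigenvalue $1/p^0=1$; \eqref{C4}$\Rightarrow$\eqref{C5} is trivial (take $e=1$); and \eqref{C6}$\Rightarrow$\eqref{C3} is the very definition of $\alpha^{(0)}$ from Theorem~\ref{maintheo}. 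For \eqref{C5}$\Rightarrow$\eqref{C6} I would iterate to get $\alpha=\Phi_\XXX^{ke}(\alpha)$ for all $k$, and then use that Theorem~\ref{maintheo} guarantees convergence of the full sequence $\Phi_\XXX^n(\alpha)$; the constant subsequence forces the common limit to equal $\alpha$.

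For the second block, \eqref{D6}$\Rightarrow$\eqref{D2} is part of Theorem~\ref{maintheo}, and \eqref{D2}$\Rightarrow$\eqref{D3} is direct: the factors of the product commute and the factor $(p^i\Phi_\XXX-\id)$ annihilates the eigenvector $\alpha^{(i)}$, so each summand of $\alpha=\sum_i\alpha^{(i)}$ is killed. The remaining implication \eqref{D3}$\Rightarrow$\eqref{D6} is where the real work lies.

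My approach to \eqref{D3}$\Rightarrow$\eqref{D6} is a Lagrange interpolation argument on $L(x)=\prod_{i=0}^u(p^ix-1)$, whose $u+1$ roots $1,1/p,\dots,1/p^u$ are distinct. I would choose Lagrange polynomials $Q_i\in\QQ[x]$ of degree $u$ with $Q_i(1/p^j)=\delta_{ij}$ and set $\gamma_i=Q_i(\Phi_\XXX)(\alpha)$. Both $(x-1/p^i)Q_i(x)$ and $\sum_iQ_i(x)-1$ vanish at every root of $L$, so they are multiples of $L$; applying them to $\alpha$ together with the hypothesis $L(\Phi_\XXX)(\alpha)=0$ yields $\Phi_\XXX(\gamma_i)=(1/p^i)\gamma_i$ and $\alpha=\sum_{i=0}^u\gamma_i$.

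It then remains to bound $\vandim\gamma_i\leq i$; since $\vandim$ is subadditive on sums, this yields $\vandim\alpha\leq\max_i\vandim\gamma_i\leq u$. For the bound on $\vandim\gamma_i$ fix $\YYY\supseteq\XXX$ with $s:=\codim\XXX-\codim\YYY>i$. Then $\overline{\gamma_i}$ is a $\Phi_\YYY$-eigenvector with eigenvalue $p^{s-i}>1$, while Theorem~\ref{maintheo}, applicable in $\Grot(\YYY)$ because vanishing dimensions are finite by Remark~\ref{foxbyexam}, writes $\overline{\gamma_i}$ as a finite sum of $\Phi_\YYY$-eigenvectors with eigenvalues in $\{1,1/p,1/p^2,\dots\}$. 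Since eigenvectors for distinct eigenvalues are linearly independent and $p^{s-i}$ lies outside this set, every summand is zero, so $\overline{\gamma_i}=0$. Proposition~\ref{conditions} then delivers $\vandim\gamma_i\leq i$, closing the argument.
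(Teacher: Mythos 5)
Your cycle for the first block, $(\textit{iii})\Rightarrow(\textit{iv})\Rightarrow(\textit{v})\Rightarrow(\textit{vi})\Rightarrow(\textit{iii})$ together with Proposition~\ref{vanishing} for $(\textit{i})\Leftrightarrow(\textit{iii})$, matches the paper's proof in substance. The divergence is in the second block. The paper closes the cycle by proving $(\textit{vi})\Rightarrow(\textit{viii})$: it takes the decomposition $\alpha=\alpha^{(0)}+\cdots+\alpha^{(u)}$ already supplied by Theorem~\ref{maintheo}, invokes Remark~\ref{decompositions} to conclude that each $\overline{\alpha^{(i)}}$ with $i<s$ dies in $\Grot(\YYY)$ once $s=\codim\XXX-\codim\YYY>u$, and then reads off $\vandim\alpha\leq u$ from Proposition~\ref{conditions}. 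You instead prove $(\textit{vii})\Rightarrow(\textit{viii})$ directly by Lagrange interpolation: from $L(\Phi_\XXX)(\alpha)=0$ alone you manufacture the eigendecomposition $\alpha=\sum_i\gamma_i$ without appealing to Theorem~\ref{maintheo}'s construction, then bound $\vandim\gamma_i$ by observing that $\overline{\gamma_i}\in\Grot(\YYY)$ would be a $\Phi_\YYY$-eigenvector with eigenvalue $p^{s-i}>1$, which is incompatible with the spectrum $\{1,1/p,1/p^2,\dots\}$ forced by the existence of an eigendecomposition in $\Grot(\YYY)$. Your version is correct; in effect you re-derive the relevant piece of Remark~\ref{decompositions} from scratch via linear independence of eigenvectors, which makes the argument more self-contained but also somewhat longer, since you duplicate the eigendecomposition machinery that Theorem~\ref{maintheo} already packages. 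The paper's route leans more heavily on its supporting remark but is shorter on the page. Both arguments exploit the same mechanism, namely that $\Phi$ picks up a factor $p^s$ under the inclusion $\Grot(\XXX)\to\Grot(\YYY)$; the trade-off is purely one of where the bookkeeping lives.
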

\begin{proof}
\eqref{C1} is equivalent to~\eqref{C4} by Proposition~\ref{vanishing};
\eqref{C4} is equivalent to~\eqref{C3} and~\eqref{C6} by
Theorem~\ref{maintheo}; \eqref{C4} implies~\eqref{C5}
implies~\eqref{C6}, so these must all be equivalent;
the proof of Theorem~\ref{maintheo} shows how~\eqref{D6} implies~\eqref{D3} which again
implies~\eqref{D2}; and combining Remark~\ref{decompositions} with
Proposition~\ref{conditions} shows that~\eqref{D2} implies~\eqref{D6}. 
\end{proof}

Having vanishing dimension exactly equal to $u>0$ of course means that
conditions~\eqref{D2}--\eqref{D6} are satisfied and that the same conditions fail
to hold if $u$ is replaced by $u-1$. In particular, if
$\vandim\alpha=u$, then 
$\alpha^{(u)}\neq 0$ and there exists a $\beta\in\grot(\comp{\XXX})$
with $\dim\beta=\codim\XXX-u$ such that
$\alpha\otimes\beta=\alpha^{(u)}\otimes\beta \neq 0$. Consequently,
if the term $\alpha^{(i)}$ is non-zero, then it has vanishing dimension $i$ and can be
regarded as ``the component of $\alpha$ that allows a 
counterexample to vanishing where the difference between co-dimension and
dimension is equal to $i$''.

\section{Numerical vanishing}
\begin{assu}
Throughout this section, we continue to assume that $R$ is complete of
prime characteristic $p>0$, and that $k$ is a perfect field.
\end{assu}

\begin{defi}
Suppose that $\XXX\subseteq\spec R$ and let $\alpha\in\Grot(\XXX)$. We
say that $\alpha$ satisfies \emph{numerical vanishing} if
$\overline\alpha=\overline{\alpha^{(0)}}$ in $\grot(\XXX)$.
\end{defi}

\begin{prop}\label{elementvanishings} Suppose that $\XXX\subseteq\spec
  R$ and let $\alpha\in\Grot(\XXX)$. For the following conditions, each
  condition implies the next.
\begin{enumerate}
\item $\alpha$ satisfies vanishing.
\item $\alpha$ satisfies numerical vanishing
\item $\alpha$ satisfies weak vanishing
\end{enumerate}
\end{prop}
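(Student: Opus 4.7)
The plan is to handle the two implications separately.

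For (i) $\Rightarrow$ (ii), this is immediate from Proposition~\ref{conditionsdimu}, which gives that vanishing is equivalent to $\alpha = \alpha^{(0)}$ already in $\Grot(\XXX)$. Applying the inclusion homomorphism $\Grot(\XXX)\to\grot(\XXX)$ yields $\overline\alpha=\overline{\alpha^{(0)}}$ in $\grot(\XXX)$, i.e.\ numerical vanishing.

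For (ii) $\Rightarrow$ (iii), suppose $\overline\alpha=\overline{\alpha^{(0)}}$ in $\grot(\XXX)$. Given $\beta\in\Grot(\comp\XXX)$ with $\dim\beta<\codim\XXX$, the goal is to show $\overline{\alpha\otimes\beta}=0$ in $\grot(\mmm)$. Since the Euler characteristic induces an isomorphism $\grot(\mmm)\cong\QQ$ by Proposition~\ref{grothendieckobs}\eqref{iso}, this reduces to verifying $\chi(\alpha,\beta)=0$. I propose to show this by a two-step comparison using $\alpha^{(0)}$ as a bridge, namely $\chi(\alpha,\beta)=\chi(\alpha^{(0)},\beta)=0$.

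For the first equality, I unpack what the hypothesis $\overline{\alpha}=\overline{\alpha^{(0)}}$ in $\grot(\XXX)$ actually means: by the very definition of the relations in $\grot(\XXX)$, this forces $\chi(\alpha,Z)=\chi(\alpha^{(0)},Z)$ for every $Z\in\Cat(\comp\XXX)$. Writing $\beta=s[Y]$ with $Y\in\Cat(\comp\XXX)$ (by Proposition~\ref{grothendieckobs}\eqref{intheform}) and extending linearly, I obtain $\chi(\alpha,\beta)=\chi(\alpha^{(0)},\beta)$. For the second equality, Proposition~\ref{conditionsdimu} implies that $\alpha^{(0)}$ itself satisfies vanishing (it is an eigenvector for $\Phi_\XXX$ with eigenvalue $1$); and the inclusion homomorphism $\Grot(\comp\XXX)\to\grot(\comp\XXX)$ sends $\beta$ to $\overline\beta$ with $\dim\overline\beta\leq\dim\beta<\codim\XXX$, so vanishing for $\alpha^{(0)}$ gives $\alpha^{(0)}\otimes\overline\beta=0$ in $\grot(\mmm)$, which translates to $\chi(\alpha^{(0)},\beta)=0$.

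The main obstacle is the mismatch between the levels on which the tensor product is defined: Proposition~\ref{grothendieckobs}\eqref{tensor} provides bi-homomorphisms on $\Grot\times\grot$ and $\Grot\times\Grot$ but not on $\grot\times\grot$, so one cannot simply tensor the identity $\overline\alpha=\overline{\alpha^{(0)}}$ (which lives in $\grot(\XXX)$) with $\beta$ directly. The trick of routing the computation through $\chi(-,-)\in\QQ$ and exploiting that the relations in $\grot(\XXX)$ are defined using precisely pairings against $\Cat(\comp\XXX)$ is what makes the argument go through cleanly.
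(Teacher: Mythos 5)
Your proof is correct and follows essentially the same route as the paper: (i)$\Rightarrow$(ii) is immediate from Proposition~\ref{conditionsdimu}, and (ii)$\Rightarrow$(iii) is proven by moving the bar from $\alpha$ over to $\beta$ and invoking vanishing for $\alpha^{(0)}$. The paper writes this as the chain $\overline{\alpha\otimes\beta}=\overline\alpha\otimes\beta=\overline{\alpha^{(0)}}\otimes\beta=\alpha^{(0)}\otimes\overline\beta=0$, implicitly using the symmetric $\grot(\XXX)\times\Grot(\comp\XXX)\to\grot(\mmm)$ pairing, whereas you reduce to scalar $\chi$-values to sidestep the asymmetry in the statement of Proposition~\ref{grothendieckobs}\eqref{tensor} --- a harmless and slightly more explicit version of the same computation.
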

\begin{proof}
It is clear from Proposition~\ref{conditionsdimu} that vanishing implies numerical vanishing. Suppose that
$\alpha$ satisfies numerical vanishing and let $\beta\in\Grot(\comp{\XXX})$ be such that $\dim\beta<\codim\XXX$.
Then
\[
\overline{\alpha\otimes\beta}=\overline\alpha\otimes\beta=\overline{\alpha^{(0)}}\otimes\beta
= \alpha^{(0)}\otimes\overline\beta = 0,
\]
since $\alpha^{(0)}$ satisfies vanishing, and we conclude that
$\alpha$ satisfies weak vanishing.
\end{proof}

As Remark~\ref{lengthrema2} will show, the implications in
Proposition~\ref{elementvanishings} are generally strict.

\begin{rema}\label{lengthrema}
If $X$ is a complex in $\Cat(\mmm)$, then, because of
Proposition~\ref{grothendieckobs}\eqref{iso}, the element $[X]\in\Grot(\mmm)$ satisfies numerical
vanishing if and only if
\begin{equation}\label{lengthcondition}
\lim_{e\to\infty} \frac{1}{p^{e\dim R}} \chi(\frob_R^e(X)) = \chi(X).
\end{equation}
As we shall see in
Proposition~\ref{sufficientcondition} below,
for~\eqref{lengthcondition} to hold, it suffices (but need not be
necessary) to verify that the equation
\[
\chi(\frob_R^e(X)) = p^{e\dim R}\chi(X)
\]
holds for $\vandim[X]$ distinct values of $e>0$.
\end{rema}

\begin{prop} \label{sufficientcondition}
Suppose that $\XXX\subseteq\spec R$ and let $\alpha\in\Grot(\XXX)$. A
sufficient condition for $\alpha$ to satisfy numerical vanishing is
that $\overline\alpha=\overline{\Phi_\XXX^e(\alpha)}$ in $\grot(\XXX)$  for
$\vandim\alpha$ distinct values of $e>0$.
\end{prop}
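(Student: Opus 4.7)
The plan is to decompose $\alpha$ into eigenvectors of $\Phi_\XXX$ via Theorem~\ref{maintheo}, translate the hypothesis into a linear system in $\grot(\XXX)$, and then use a Vandermonde argument to deduce that all higher eigencomponents map to zero. If $u := \vandim\alpha \leq 0$, then $\alpha$ already satisfies vanishing by Proposition~\ref{conditionsdimu}, hence numerical vanishing by Proposition~\ref{elementvanishings}, and the hypothesis is vacuous; so I may assume $u \geq 1$.

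By Theorem~\ref{maintheo} I write $\alpha = \alpha^{(0)} + \alpha^{(1)} + \cdots + \alpha^{(u)}$ with $\Phi_\XXX(\alpha^{(i)}) = p^{-i}\alpha^{(i)}$, so that for every $e \geq 0$
\[
\Phi_\XXX^e(\alpha) - \alpha = \sum_{i=1}^u \bigl(p^{-ie} - 1\bigr)\,\alpha^{(i)}.
\]
The hypothesis supplies $u$ distinct positive integers $e_1, \ldots, e_u$ with $\overline{\Phi_\XXX^{e_j}(\alpha)} = \overline{\alpha}$ in $\grot(\XXX)$, so setting $\gamma_i := \overline{\alpha^{(i)}} \in \grot(\XXX)$ gives the $u$ relations $\sum_{i=1}^u (p^{-ie_j} - 1)\,\gamma_i = 0$ for $j = 1, \ldots, u$.

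Next I view $g(x) := \sum_{i=1}^u \gamma_i (x^i - 1)$ as a polynomial in $x$ of degree at most $u$ whose coefficients lie in the $\QQ$-vector space $\grot(\XXX)$. Tautologically $g(1) = 0$, and the relations above say $g(p^{-e_j}) = 0$ for $j = 1, \ldots, u$. Since $1, p^{-e_1}, \ldots, p^{-e_u}$ are $u+1$ distinct rationals, the corresponding Vandermonde matrix over $\QQ$ is invertible; applying this matrix entrywise to the coefficient vector (which sits in $(\grot(\XXX))^{u+1}$) forces every coefficient of $g$ to vanish. In particular $\gamma_i = 0$ for $i = 1, \ldots, u$, whence $\overline{\alpha} = \overline{\alpha^{(0)}}$, which is numerical vanishing.

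The only point worth flagging is this last step: because $\grot(\XXX)$ is merely a $\QQ$-vector space rather than a field, one cannot naively speak of ``roots'' of $g$, and one must instead invoke Vandermonde invertibility over $\QQ$ and the fact that a $\QQ$-linear isomorphism acting entrywise on $(\grot(\XXX))^{u+1}$ detects zero. Everything else is a routine unpacking of the eigenvalue decomposition from Theorem~\ref{maintheo}.
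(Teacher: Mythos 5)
Your proof is correct and follows essentially the same route as the paper: after expanding $g(x)=\sum_{i=1}^u\gamma_i(x^i-1)$ the constant term is $\overline{\alpha^{(0)}}-\overline\alpha$, so $g$ is literally the polynomial the paper evaluates at $x=1/p^e$, and your Vandermonde-invertibility argument is exactly what the paper's phrase ``has $u+1$ roots, hence must be zero'' is implicitly appealing to. You simply make explicit the point (that coefficients live in a $\QQ$-vector space, so one argues via the invertible Vandermonde matrix rather than via root counting) that the paper leaves tacit.
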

\begin{proof}
Let $u=\vandim\alpha$. According to Theorem~\ref{maintheo}, the difference
$\overline{\Phi_\XXX^e(\alpha)}-\overline\alpha$ in $\grot(\XXX)$ is
obtained by letting  $x=1/p^e$ in the polynomial  
\[
(\overline{\alpha^{(0)}} - \overline\alpha) +
x\overline{\alpha^{(1)}} +\cdots + x^u\overline{\alpha^{(u)}} .
\]
The polynomial always has the root $x=1$. If there are $u$ additional
roots, it must be the zero-polynomial, so that
$\overline\alpha=\overline{\alpha^{(0)}}$.
\end{proof}

\begin{defi}
We say that $R$ satisfies \emph{vanishing} (or \emph{numerical
 vanishing} or \emph{weak vanishing}, respectively) if all elements
of $\Grot(\XXX)$ satisfy vanishing (or numerical
vanishing or weak vanishing, respectively) for all $\XXX\subseteq\spec
R$.
\end{defi}

\begin{prop} \label{numericalvanishing}
The following are equivalent.
\begin{enumerate}
\item \label{numericalvanishing1} $R$ satisfies numerical vanishing.
\item \label{numericalvanishing2} $\overline\alpha=\overline{\Phi_\XXX(\alpha)}$ in $\grot(\XXX)$ for all
  $\XXX\subseteq\spec R$ and $\alpha\in\Grot(\XXX)$. 
\item \label{numericalvanishing3} $\overline\alpha=\overline{\Phi_\mmm(\alpha)}$ in $\grot(\mmm)$ for all
  $\alpha\in\Grot(\mmm)$.
\item \label{numericalvanishing4} $\overline\alpha=\overline{\alpha^{(0)}}$ in $\grot(\XXX)$ for all
  $\XXX\subseteq\spec R$ and $\alpha\in\Grot(\XXX)$.
\item \label{numericalvanishing5} $\overline\alpha=\overline{\alpha^{(0)}}$ in $\grot(\mmm)$ for all
  $\alpha\in\Grot(\mmm)$.
\end{enumerate}
\end{prop}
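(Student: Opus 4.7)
The plan is to prove the cycle $\text{(i)}\Leftrightarrow\text{(iv)}\Rightarrow\text{(ii)}\Rightarrow\text{(iii)}\Rightarrow\text{(v)}\Rightarrow\text{(iv)}$. The equivalence of (i) and (iv) is immediate from the definition of numerical vanishing for $R$, and (ii)$\Rightarrow$(iii) is the trivial specialization $\XXX=\{\mmm\}$. Three substantive directions remain: (iv)$\Rightarrow$(ii), (iii)$\Rightarrow$(v), and (v)$\Rightarrow$(iv).

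For (iv)$\Rightarrow$(ii), the key observation is that $\Phi_\XXX$ fixes its eigenvalue-one component: the formula $\alpha^{(0)}=\lim_{e\to\infty}\Phi_\XXX^e(\alpha)$ from Theorem~\ref{maintheo} immediately gives $\Phi_\XXX(\alpha)^{(0)}=\lim_{e\to\infty}\Phi_\XXX^{e+1}(\alpha)=\alpha^{(0)}$, and applying (iv) to both $\alpha$ and $\Phi_\XXX(\alpha)$ then produces $\overline\alpha=\overline{\alpha^{(0)}}=\overline{\Phi_\XXX(\alpha)^{(0)}}=\overline{\Phi_\XXX(\alpha)}$. For (iii)$\Rightarrow$(v), iterating (iii) yields $\overline\alpha=\overline{\Phi_\mmm^e(\alpha)}$ in $\grot(\mmm)$ for every $e\geq 0$; since $\vandim\alpha$ is finite by Remark~\ref{foxbyexam}, Theorem~\ref{maintheo} supplies the limit $\lim_e\Phi_\mmm^e(\alpha)=\alpha^{(0)}$ in $\Grot(\mmm)$, and continuity of the inclusion $\Grot(\mmm)\to\grot(\mmm)$ from Proposition~\ref{grothendieckobs} allows me to pass to the limit and conclude.

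The substantive step is (v)$\Rightarrow$(iv), which lifts a statement about the one-dimensional space $\grot(\mmm)\cong\QQ$ to an arbitrary $\grot(\XXX)$. Given $\alpha\in\Grot(\XXX)$, the defining relations of $\grot(\XXX)$ make the desired equality $\overline\alpha=\overline{\alpha^{(0)}}$ equivalent to $\chi(Y,\alpha)=\chi(Y,\alpha^{(0)})$ for every $Y\in\Cat(\comp\XXX)$. I will apply (v) separately to the two elements $[Y]\otimes\alpha$ and $[Y]\otimes\alpha^{(0)}$ of $\Grot(\mmm)$. Combining (v) with the tensor-multiplicativity formula $(\beta\otimes\gamma)^{(0)}=\beta^{(0)}\otimes\gamma^{(0)}$ from Remark~\ref{decompositions} and the trivial identity $(\alpha^{(0)})^{(0)}=\alpha^{(0)}$ shows that both images in $\grot(\mmm)$ reduce to $\overline{[Y]^{(0)}\otimes\alpha^{(0)}}$. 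Evaluating the resulting equality through the isomorphism $\chi\colon\grot(\mmm)\overset{\cong}{\to}\QQ$ delivers the required intersection-multiplicity identity.

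The main obstacle I anticipate is this last step: recognizing that (v), which at first glance controls only $\grot(\mmm)$, is strong enough to propagate to every $\grot(\XXX)$. The mechanism is the duality encoded in the definition of $\grot(\XXX)$ via pairings with $\Cat(\comp\XXX)$, combined with the tensor-compatibility of the Frobenius eigendecomposition from Remark~\ref{decompositions}; with these two pieces in hand, the argument reduces to a short computation in $\grot(\mmm)$.
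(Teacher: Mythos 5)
Your proof is correct and takes essentially the same approach as the paper: the decisive step is to reduce equality in $\grot(\XXX)$ to equality of pairings against $\Grot(\comp\XXX)$, then apply condition~(\textit{v}) to tensor products in $\Grot(\mmm)$ together with the multiplicativity $(\beta\otimes\gamma)^{(0)}=\beta^{(0)}\otimes\gamma^{(0)}$ from Remark~\ref{decompositions} and the identity $\Phi_\XXX(\alpha)^{(0)}=\alpha^{(0)}$. The paper closes the same cycle by showing (\textit{v})$\Rightarrow$(\textit{ii}) via this computation and declaring (\textit{ii})$\Rightarrow$(\textit{iv}) and (\textit{iii})$\Rightarrow$(\textit{v}) ``clear''; you instead prove (\textit{v})$\Rightarrow$(\textit{iv}) directly and make the limit/continuity argument behind (\textit{iii})$\Rightarrow$(\textit{v}) and (\textit{iv})$\Rightarrow$(\textit{ii}) explicit, which is a harmless reorganization.
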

\begin{proof}
By definition, \eqref{numericalvanishing1} is equivalent to
\eqref{numericalvanishing4}. It is clear that
\eqref{numericalvanishing2} implies \eqref{numericalvanishing3} and
that \eqref{numericalvanishing4} implies
\eqref{numericalvanishing5}. It is 
also clear that \eqref{numericalvanishing2} implies
\eqref{numericalvanishing4} and that \eqref{numericalvanishing3}
implies \eqref{numericalvanishing5}. Thus, 
it only remains to prove that \eqref{numericalvanishing5} implies
\eqref{numericalvanishing2}. So assume \eqref{numericalvanishing5} and
let $\XXX\subseteq\spec R$ and $\alpha\in\Grot(\XXX)$. Then, for all $\beta\in\Cat(\comp\XXX)$,
\[
\overline{\Phi_\XXX(\alpha)}\otimes\beta=\overline{\Phi_\XXX(\alpha)\otimes\beta}=
\overline{(\Phi_\XXX(\alpha)\otimes\beta)^{(0)}}=
\overline{\Phi_\XXX(\alpha)^{(0)}\otimes\beta^{(0)}}=
\overline{\alpha^{(0)}\otimes\beta^{(0)}},
\]
where we have applied Remark~\ref{decompositions} and the fact that $\Phi_\XXX(\alpha)^{(0)}=\alpha^{(0)}$. Similarly,
\[
\overline{\alpha}\otimes\beta=\overline{\alpha\otimes\beta}=
\overline{(\alpha\otimes\beta)^{(0)}}=
\overline{\alpha^{(0)}\otimes\beta^{(0)}}.
\]
Thus, $\overline\alpha=\overline{\Phi_\XXX(\alpha)}$.
\end{proof}

\begin{rema}\label{lengthrema2}
Comparing Remark~\ref{lengthrema} with
Proposition~\ref{numericalvanishing}, we see that
a necessary and sufficient condition
for $R$ to satisfy numerical vanishing is that
\begin{equation}\label{lengthcondition2}
\chi(\frob_R(X))=p^{\dim R}\chi(X)
\end{equation}
for all complexes $X\in\Cat(\mmm)$, and by
Proposition~\ref{elementvanishings}, this condition implies that
$R$ satisfies weak vanishing.

Dutta~\cite{duttamultiplicity} has proven that
condition~\eqref{lengthcondition2} holds when $R$ is Gorenstein of dimension (at most) $3$ or a complete
intersection (of any dimension). 
The rings in the counterexamples by Dutta, Hochster and
McLaughlin~\cite{dhm} and Miller and Singh~\cite{millersingh} are complete
intersections (which can be assumed to be complete of
characteristic $p$ and with perfect residue fields), and hence they satisfy
numerical vanishing without satisfying vanishing.

Any ring of dimension at most $4$ will satisfy weak vanishing; this
follows from the result by Foxby~\cite{foxbyM}. Roberts~\cite{robertsI} has
shown the existence of a Cohen--Macaulay ring of dimension $3$ (which
can also be assumed to be complete of characteristic $p$ and with perfect residue
field) such that condition~\eqref{lengthcondition2} does not hold. Thus, this ring
satisfies weak vanishing without satisfying numerical vanishing.
\end{rema}

\section*{Acknowledgments}
I am grateful to Anders Frankild for many useful suggestions that
helped improve this paper significantly. I would also like to thank
the anonymous referee for some useful suggestions that 
helped reduce the complexity of this paper. 
Finally, I thank Hans-Bj{\o}rn Foxby and Anurag Singh for reading and commenting the
paper, and Marc Levine for explaining his work to me. 


\begin{thebibliography}{10}

\bibitem{duttamultiplicity}
Sankar~P. Dutta, \emph{Frobenius and multiplicities}, J. Algebra \textbf{85}
  (1983), no.~2, 424--448. \MR{MR725094 (85f:13022)}

\bibitem{dhm}
Sankar~P. Dutta, M.~Hochster, and J.~E. McLaughlin, \emph{Modules of finite
  projective dimension with negative intersection multiplicities}, Invent.
  Math. \textbf{79} (1985), no.~2, 253--291. \MR{MR778127 (86h:13023)}

\bibitem{foxbyM}
Hans-Bj{\o}rn Foxby, \emph{The {M}acrae invariant}, Commutative algebra: Durham
  1981 (Durham, 1981), London Math. Soc. Lecture Note Ser., vol.~72, Cambridge
  Univ. Press, Cambridge, 1982, pp.~121--128. \MR{85e:13033}

\bibitem{foxbyhalvorsen}
Hans-Bj{\o}rn Foxby and Esben~Bistrup Halvorsen, \emph{Grothendieck groups for
  categories of complexes}, Preprint (can be downloaded from
  \verb@www.math.ku.dk/~ebh/@), 2006.

\bibitem{gilletsoule}
H.~Gillet and C.~Soul{\'e}, \emph{Intersection theory using {A}dams
  operations}, Invent. Math. \textbf{90} (1987), no.~2, 243--277. \MR{MR910201
  (89d:14005)}

\bibitem{KuranoRR}
Kazuhiko Kurano, \emph{A remark on the {R}iemann-{R}och formula on affine
  schemes associated with {N}oetherian local rings}, Tohoku Math. J. (2)
  \textbf{48} (1996), no.~1, 121--138. \MR{MR1373176 (97c:14006)}

\bibitem{millersingh}
Claudia~M. Miller and Anurag~K. Singh, \emph{Intersection multiplicities over
  {G}orenstein rings}, Math. Ann. \textbf{317} (2000), no.~1, 155--171.
  \MR{MR1760672 (2001h:13031)}

\bibitem{peskineszpiro}
C.~Peskine and L.~Szpiro, \emph{Dimension projective finie et cohomologie
  locale. {A}pplications \`a la d\'emonstration de conjectures de {M}.
  {A}uslander, {H}. {B}ass et {A}. {G}rothendieck}, Inst. Hautes \'Etudes Sci.
  Publ. Math. (1973), no.~42, 47--119. \MR{MR0374130 (51 \#10330)}

\bibitem{robertsC}
Paul~C. Roberts, \emph{The vanishing of intersection multiplicities of perfect
  complexes}, Bull. Amer. Math. Soc. (N.S.) \textbf{13} (1985), no.~2,
  127--130. \MR{87c:13030}

\bibitem{robertsI}
\bysame, \emph{Intersection theorems}, Commutative algebra (Berkeley, CA,
  1987), Math. Sci. Res. Inst. Publ., vol.~15, Springer, New York, 1989,
  pp.~417--436. \MR{MR1015532 (90j:13024)}

\bibitem{roberts}
\bysame, \emph{Multiplicities and {C}hern classes in local algebra}, Cambridge
  Tracts in Mathematics, vol. 133, Cambridge University Press, Cambridge, 1998.
  \MR{2001a:13029}

\bibitem{serre}
Jean-Pierre Serre, \emph{Alg\`ebre locale. {M}ultiplicit\'es [{L}ocal algebra.
  {M}ultiplicities]}, Cours au Coll\`ege de France, 1957--1958, r\'edig\'e par
  Pierre Gabriel. Troisi\`eme \'edition, 1975. Lecture Notes in Mathematics,
  vol.~11, Springer-Verlag, Berlin, 1965. \MR{34 \#1352}

\end{thebibliography}

\providecommand{\bysame}{\leavevmode\hbox to3em{\hrulefill}\thinspace}
\providecommand{\MR}{\relax\ifhmode\unskip\space\fi MR }
\providecommand{\MRhref}[2]{%
  \href{http://www.ams.org/mathscinet-getitem?mr=#1}{#2}
}
\providecommand{\href}[2]{#2}

\end{document}